\newcommand{\R}{\mathbb{R}}
\newcommand{\U}{\mathbb{U}}
\newcommand{\N}{\mathbb{N}}
\newcommand{\W}{\mathbb{W}}
\newcommand{\X}{\mathbb{X}}
\DeclareMathOperator{\conv}{co}
\definecolor{ao}{rgb}{0.0, 0.5, 0.0}
\newtheorem{theorem}{Theorem}
\newtheorem{remark}{Remark}
\newtheorem{lemma}{Lemma}
\newtheorem{definition}{Definition}
\newtheorem{problem}{Problem}
\title{Sample Truncation for  Scenario Approach to Closed-loop Chance Constrained Trajectory Optimization for Linear Systems }
\author{Hossein Sartipizadeh,  Beh{\c{c}}et A{\c{c}}{\i}kme{\c{s}}e
	
	\thanks{H. Sartipizadeh is with University of Texas at Austin, TX, USA. Email: {\tt\small hsartipi@utexas.edu}, B. A{\c{c}}{\i}kme{\c{s}}e is with the Department of Aeronautics \& Astronautics in the University of Washington, Seattle, WA. Email: {\tt\small behcet@uw.edu}.}
}
\begin{document}

\maketitle
\thispagestyle{empty}
\pagestyle{empty}

\begin{abstract}

This paper studies   closed-loop chance constrained control problems with disturbance feedback (equivalently state feedback) where state and input vectors must remain in a prescribed  polytopic safe region with a predefined confidence level. We propose to use a {\em scenario approach}  where   the uncertainty is replaced with a set of random samples (scenarios).
Though a standard form of  scenario approach is applicable in principle, it typically requires a large number of samples to ensure the required confidence levels.    To resolve this drawback, we propose a method to reduce the computational complexity by {\em eliminating} the redundant samples and, more importantly, by  {\em truncating} the less informative samples. Unlike the prior methods that start from the full sample set and remove the less informative samples at each step, we sort the samples in a descending order by first finding the most dominant ones. In this process  the importance of each sample is measured via a proper mapping. 
Then  the most dominant samples can be selected based on the allowable  computational complexity and the rest of the samples are truncated offline. 
The truncation error is later compensated for by adjusting the safe regions via properly designed   buffers, whose sizes   are functions of the feedback gain and the truncation error.

\end{abstract}


\section{Introduction} \label{sec:Intro}

Computing solutions of finite horizon optimal control problems (FHOP) is a key capability  in controlling  dynamical systems with input and state constraints  \cite{pontryagin1987mathematical}. 
Given the initial state  of the system in a FHOP, the future state trajectory can be obtained for any input by propagating the states through an  explicit model.  The propagated trajectories that meet the constraints and minimize the objective cost are then selected as optimal input and state trajectories. 
In practice, however, uncertainties may impact the accuracy of the explicit model  and consequently the predicted state trajectory. A robust approach leads to a solution that satisfies the constraints for every uncertainty realizations \cite{campo1987robust,ben1998robust,bemporad1999robust}. Since uncertainties are typically of the stochastic nature, they can be large enough to make the robust control problem infeasible, i.e.,   a  solution satisfying the  constraints under all possible realizations of the uncertainty can not be found. Hence, this paper adopts a stochastic problem formulation,   also referred to as  \textit{Chance Constrained Trajectory Optimization} (CCTO), that relaxes  the control problem to  a probabilistic one where the constraints are to be met by a prescribed  confidence level $1-\delta$,  $\delta \in (0,1)$  \cite{charnes1959chance}.  

Solving CCTO, that is widely investigated  in  Stochastic Model Predictive Control (SMPC) context \cite{schwarm1999chance,li2000robust,li2002probabilistically} (earlier work)  and \cite{mesbah2016stochastic} (contains a comprehensive review), is typically challenging as it requires  calculating multi-dimensional integrals, which can result in a non-convex optimization problem \cite{nemirovski2006convex,blackmore2009convex}. Even   CCTO problems with convex constraints  are often computationally intractable \cite{nemirovski2006convex}. When the uncertainty distribution is assumed to be  known and there is additional structure in the problem (e.g., temporal spatial independence of uncertainty,  affine dependence of constraints on the uncertainty), the original CCTO can be analytically approximated by a conservative but tractable problem (e.g.\cite{ben2000robust,nemirovski2003tractable,bertsimas2004price,blackmore2009convex,rajendran2019stochastic,ansaripour2016some}). 
Nevertheless, existing approximation-based approaches either require restrictive assumptions to hold true, which may  limit their applicability.  

To tackle this issue, \emph{scenario approach} suggests to replace the uncertainty  with a finite number (say $N$) of so-called scenarios through a random sampling, and then find the robust solution for the sampled uncertainty set \cite{calafiore2006scenario,calafiore2013stochastic}. A bound on the sufficient number of samples is given in \cite{calafiore2006scenario,calafiore2010random} based on the required confidence level.  Since the samples are taken randomly, there is a risk of failure  in achieving the desired confidence level unlike the analytical methods. 
The main advantages of scenario approach are its generality and tractability: it converts the original stochastic problem to a convex problem regardless of the probability distribution of the uncertainty. That is, there is no need  to know what the distribution is; all we need is to be able to sample from the distribution. However, to achieve a reasonable risk of failure, a large number of samples is typically needed. 
The scenario approach may consequently result in a computationally expensive problem since the constraints must be checked at each sample.
Although some studies offer fewer number of samples (for instance, \cite{lorenzen2017stochastic,zhang2015convex, zhang2015onthesamplesize}) as compared to the original scenario approach by either presenting a tighter bound or discarding the redundant scenarios (e.g. \cite{calafiore2010random,campi2011sampling,lorenzen2017stochastic}), a fairly large number of samples are still needed to ensure the desired specification.

We have recently introduced an approximate convex hull-based framework to reduce the computational complexity of open-loop CCTO by truncating the samples without losing the desired confidence level \cite{sartipizadeh2016uncertainty,satipizadeh2018Automatica,sartipizadeh2018CCTO_OL}. 
Earlier scenario-based methods discard samples, hence reduce  the number of  samples, while increasing  the risk of failure (which is denoted by $\beta$ in the current paper).  On the contrary, our truncation approach preserves prescribed levels of risk of failure ($\beta$) and the confidence in constraint satisfaction (denoted by $\delta$) by buffering the feasible constraint sets to account for the  truncation error. 
To this end, we inner approximate the state uncertainty region of scenario approach using a subset of $\hat{N}$ scenarios, referred to as \textit{truncated sample set}, where  $\hat{N}$ can be chosen significantly smaller than $N$ (sufficient number of samples needed for scenario approach).  CCTO problem is then solved by checking the constraints only over the truncated sample set while the truncation error is compensated for by adjusting the safe region using a proper \textit{buffer}. In our proposed method, the truncated sample set is initiated from  an empty set as opposed to discarding samples that remove one sample at a time from the full sample set. Since the truncated sample set gives the best approximation of the convex hull of  original sample set  (with $N$ samples)     with an approximation  error of $\epsilon$, this method is
referred to as $\epsilon$-approximate convex hull. We use \textit{truncation} and \textit{approximation} interchangeably in this paper. 

This concept is illustrated in Figure~\ref{fig:Intro}. Assume that state $x$ at time $t$ is to remain in the safe region $\X$ with some confidence level. Let $N$ be the sufficient number of samples proposed by scenario approach for the desired $\delta$ and let $\mathcal{X}_{t}=\conv \{x_{t}^{(1)},\cdots,x_{t}^{(N)} \}$ be the convex hull of state trajectories  due to  the $N$ scenarios.
$\hat{\mathcal{X}}_{t}=\conv \{x_{t}^{(1)},\cdots,x_{t}^{(\hat{N})} \}$ is  an inner approximation of $\mathcal{X}_{t}$ obtained by $\hat{N}$ samples after truncation. $\mathcal{X}$ and $\hat{\mathcal{X}}$ are also referred to as \textit{original} and \textit{truncated uncertainty envelopes}, respectively.  The \textit{buffered constraint set} at time $t$ is shown as $\hat{\X}_{t}$ and is calculated such that the original  uncertainty envelope at time $t$ remains in the safe region $\X$ when  the truncated uncertainty envelope $\hat{\mathcal{X}}_{t}$ is kept in  $\hat{\X}_{t}\subseteq \X$. Consequently, we will impose that  $\hat{\mathcal{X}}_{t} \subseteq \hat{\X}_{t}$  to ensure the specified levels of confidence and risk of failure for  constraint satisfaction.

 In open-loop scheme, sample truncation and buffer computation are performed once and offline \cite{sartipizadeh2018CCTO_OL}. In addition, since the inputs are deterministic, buffers only need to be calculated for the state constraints.  However, closed-loop scheme with state feedback is  preferred due to the benefits of using the potential knowledge of future states as they become available. When the state feedback gain is itself a decision variable, the closed-loop CCTO becomes  a non-convex problem. To resolve this issue and set up a convex optimization problem, disturbance feedback can be equivalently used \cite{van2002conic,lofberg2003approximations,goulart2006optimization}, which is the approach adopted in this paper.    
 
  In this paper, we extend our sample truncation framework to the closed-loop  CCTO with disturbance feedback, where the feedback gain is also an optimization variable and can  be selected from a set of stabilizing gains with a common Lyapunov function.  In this case, unlike the open-loop scheme, the input is also stochastic and needs buffering to compensate for the truncation error. Furthermore, buffers are dynamic variables and  are  functions of the feedback gain.

\begin{figure}[h!]
	\begin{center}
		\scalebox{.85}{
		\begin{tikzpicture}[thick,scale=1]
		\coordinate (a1) at (0,0);
		\coordinate (b1) at (0.1,0.1);
		\coordinate (c1) at (0.15,0.15);
		\coordinate (d1) at (0.3,0.3);
		\coordinate (e1) at (1.5,1);
		
		%
		%
		\draw (a1) -- (0:5cm) node[below] {$\X$} -- ++ (40:2cm)  -- ++ (140:4cm) -- ++ (190:3cm) -- cycle ;
		\draw [thick,dashed, black!70!] (c1) -- ++ (0:4.45cm) node[above] {$\hat{\X}_{t}$} -- ++ (40:1.93cm) -- ++ (140:3.45cm) -- ++ (190:2.87cm) -- cycle ;
		
		%
		%
		\draw [thick,black!100!,fill=black!10!] (e1) -- ++ (-10:1cm) coordinate(e2){} -- ++ (0:1cm) coordinate(e3){} -- ++ (40:1cm)coordinate(e4){} -- ++ (120:1cm) coordinate(e5){} -- ++ (150:.5cm) coordinate(e6){} -- ++ (180:.8cm) coordinate(e7) -- ++ (210:.5cm) coordinate(e8){} -- cycle ;
		
		
		\draw [thick, dashed,black!70!, fill=black!20!] (e1) --(e3) --(e4)-- (e6) -- (e8) --cycle;	
					
		\draw[<-,>=stealth] (6.0,1.75) -- ++(0.5,.7) node[above] {original constraints};
		\draw[<-,>=stealth,dashed] (4.75,2.5) -- ++(.7,1) node[above] {buffered constraints};
				
		\node at (e7) [above] {$ \mathcal{X}_{t}$};
		\node at (3,1.5) [above] {$ \hat{\mathcal{X}}_{t}$};
		
		\end{tikzpicture}
	}
	\end{center}
\vspace{-5mm}
	\caption{Uncertainty characterization for polytopic constraints using an approximate convex hull. $\mathcal{X}_{t}$, {\em uncertainty envelope},  denoted by grey represents the convex hull of  states at time $t$ obtained from $N$ samples of the disturbance signal. $\hat{\mathcal{X}_{t}}$, shown by dashed region, is its approximation with $\hat{N}$ extreme points, {\em truncated uncertainty envelope}.  keeping $\hat{\mathcal{X}}_{t}$ in $\hat{\X}_{t}$ at time $t$ guarantees that $\mathcal{X}_{t}$ remains in $\X$.} 
	\label{fig:Intro}
\end{figure}
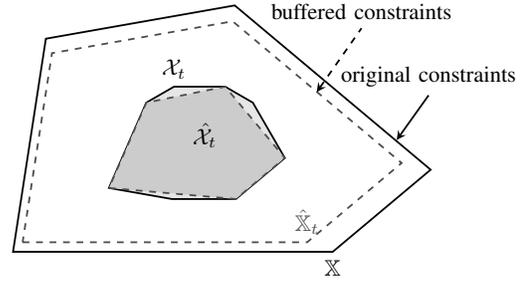


\textbf{Notation:}
$\R$ and $\N$ are sets of real and natural numbers, with $\R^{n}$ a length $n$ vector of real numbers and $\N_{\leq n}$ the set of natural numbers up to $n$.  For $x\in\R^{n}$, $x^{T}$ denotes the transpose of $x$ and $x_i$ is the $i$th element of $x$.  Also we define $\mathbb{C}=\{x\in\R^{n}| x_i\geq0, \sum_{i}x_{i}=1\}$ as the set of convex coefficients. Given a set $\W$ with elements $w$ and function $f(w)$, let $f(\W)$ denote the set $\{f(w) | w \in \W\}$. $I_{n}\in\R^{n\times n}$ is the identity matrix. \textbf{1} is a vector of ones of compatible dimension.  Given a set $\W$, $\mathcal{W}=\conv{\W}$ is the convex hull of $\W$. $\W_{1} \backslash\W_{2}$ is the set obtained by removing the elements in $\W_{2}$ from $\W_{1}$. Let  $M_{1}$ to $M_{m}$ be $m$ matrices from the same size. $vec(M_{1},\cdots,M_{m})$ shows the vectorized form of the concatenated matrix $M=[M_{1},\cdots, M_{m}]$.


\section{Problem Description} \label{sec:ProblemDescription}

We consider the following discrete-time LTI system, 
\begin{equation} \label{eq:sys_dynamics}
x_{t+1}=Ax_{t}+B_{u}u_{t}+ B_{w}w_{t}\\
\end{equation} 
where $x_{t}\in\R^{n_x}$ and $u_{t}\in \R^{n_{u}}$, are the system state and input vectors at sample time $t$, respectively, and  $ w_{t} \in \W \subseteq \R^{n_w}$ where $\W$ represents an unbounded disturbance set  which its probability distribution is not necessarily known, but it can be sampled from. Given the system dynamics \eqref{eq:sys_dynamics},  the design goal in a CCTO problem is to determine control inputs belonging to set $\U$ over a $p$-length future horizon such that the resulting state trajectory remains in a desired set $\X$ with a prescribed confidence. In this paper, $\X$ and $\U$ are assumed to be polytopes,
 \begin{subequations}
 \begin{align}
 \X&=\{x|f^{x}x\leq \textbf{1}\}, \\
 \U&=\{u|f^{u}u\leq \textbf{1}\}, 
 \end{align}
 \end{subequations}
where $f^{x}\in\R^{n_{c_x}\times n_x}$ and $f^{u}\in\R^{n_{c_u}\times n_u}$. 
Let $p$ be a positive integer and without loss of generality assign $t=0$ to the current, decision, time and $t>0$ are time epochs in the future. By stacking input, output, and state as,
\begin{equation}
X=\begin{bmatrix} x_{1}  \\ \vdots \\x_{p} \end{bmatrix},
U=\begin{bmatrix} u_{0}  \\ \vdots \\u_{p-1} \end{bmatrix}, 
W=\begin{bmatrix} w_{0}  \\ \vdots \\w_{p-1} \end{bmatrix},
\end{equation}
we define the stacked system as
\begin{equation} \label{eq:sys_stacked}
X= G_{x}x_{0} + G_{u} U + G_{w}W,
\end{equation}
where $x_{0}$ is the initial state and,
\begin{equation}
\begin{aligned}
G_{x}&=\begin{bmatrix} A \\ A^{2} \\ \vdots \\A^{p} \end{bmatrix},
G_{w}&=\begin{bmatrix} B_{w}    \\ AB_{w} & \hspace{8mm} \mbox{\Large \(0 \)}  \\ \vdots &\ddots  &\\A^{p-1}B_{w} &\cdots & B_{w} \end{bmatrix}.\\
\end{aligned}
\end{equation}
Furthermore, $G_{u}$ is defined similar to $G_{w}$ by replacing $B_{w}$ with $B_{u}$. 

\subsection{State Feedback vs. Disturbance Feedback}
In open-loop scheme, at any time $t\in\N_{\leq p}$, the input $u_{t}$ is computed only  based on the initial state $x_0$. When a full state feedback is available, one can also use the knowledge of states $x_{1}$ to $x_{t-1}$ to calculate $u_{t}$ 
at the future times $t\in\N_{\leq p}$. For instance, the input policy in a typical tracking error feedback with respect to the nominal state trajectory is defined as $u_{t}=L_{t}e_{t}+\hat{u}_{t}$ with $e_{t}=x_{t}-\hat{x}_{t}$ where $\hat{x}$ and $\hat{u}$  denote the nominal state and input vectors, and are calculated  via  the  nominal system $\hat{x}_{t+1}=A\hat{x}_{t}+B_{u}\hat{u}_{t}$. 
This feedback policy results in the error dynamics $e_{t+1}=(A+B_{u}L_{t})e_{t}+B_{w}w_{t}$.  This closed-loop input policy can be written as $u_{t}=L_{t}x_{t}+g_{t}$ with $g_{t}=L_{t}\hat{x}_{t}+\hat{u}_{t}$, which is also captured within the general from, 
%
%
\begin{equation} \label{eq:inputpolicy_sf}
u_{i}=\sum_{j=0}^{i} L_{i,j}x_{j}+g_{i}, \hspace{15mm} i=0,\cdots,p-1.
\end{equation}
 In \eqref{eq:inputpolicy_sf}, the input at time $t$ is parametrized as a time varying affine function of states  up to time $t$ while $L_{i,j}$ and $g_{i}$ are to be calculated online. It can be easily investigated that the CCTO problem for system \eqref{eq:sys_dynamics} with the input policy \eqref{eq:inputpolicy_sf} is non-convex \cite{lofberg2003approximations,goulart2006optimization}. 

An alternative to state feedback policy \eqref{eq:inputpolicy_sf} is disturbance feedback as suggested in  \cite{van2002conic,lofberg2003approximations,goulart2006optimization}. Note that for any $j\leq i$ in \eqref{eq:inputpolicy_sf} $x_{j}=Ax_{j-1}+B_{u}u_{j-1}+B_{w}w_{j-1}$,  the input can be alternatively parametrized as  an  affine function of previous disturbances, 
\begin{equation} \label{eq:inputpolicy_df}
u_{i}=\sum_{j=0}^{i-1} K_{i,j}w_{j}+v_{i}, \hspace{15mm} i=0,\cdots,p-1.
\end{equation}
This can be shown in the stacked form $U=KW+V$ with,
\begin{equation} \label{eq:KV}
V=\begin{bmatrix} v_{0}  \\ \vdots \\v_{p-1} \end{bmatrix}, \ \ 
K=\begin{bmatrix} 0   &\cdots &\cdots &0 \\ K_{1,0} &0 &\cdots &0  \\ \vdots &\ddots  &\ddots  &\vdots\\K_{p-1,0} &\cdots &K_{p-1,p-2}& 0 \end{bmatrix}.
\end{equation}

The input policy \eqref{eq:inputpolicy_df} is easy to implement since it has a causal structure, i.e., the input at time $t$ only depends on disturbances up to time $t-1$. Note that the disturbance sequences can be calculated as the difference between actual and nominal states. In addition, since the disturbance is not a decision variable as opposed to state, policy \eqref{eq:inputpolicy_df} for a convex $\W$ set results in a convex optimization problem. Furthermore, disturbance feedback is in fact equivalent to the state feedback (see Theorem~9 in \cite{goulart2006optimization}), i.e., given $K$ and $V$, one can easily derive equivalent gains for policy \eqref{eq:inputpolicy_sf} (transformation mapping is given in \cite{goulart2006optimization}, Section~5). 

\subsection{Chance Constrained Trajectory Optimization} \label{sec:ProblemDes_StochasticMPC}


Here we present  a standard chance constrained control problem formulation that ensures state and control constraint satisfaction  by a predefined  probability $1-\delta$, also called \textit{confidence level}, for some $\delta\in[0,1)$ in exchange for reducing conservatism and achieving feasibility. Therefore, the original closed-loop CCTO problem with disturbance feedback $U=KW+V$ is defined as

\begin{problem} \label{prb:OriginalLifted}  Original CCTO
\begin{equation}
\begin{aligned}
\begin{aligned}
\displaystyle\min_{K,V} ~ &\quad \mathbb{E}(J(X,U)) \\
\mbox{s.t. } & \quad
X= G_{x}x_{0} + G_{u} V +(G_{u} K+ G_{w}) W \\
& \quad \mathbb{P}(F^{x}X\leq \textbf{1}) \geq 1-\delta \hspace{20mm} W\in\W^{p} \\
& \quad \mathbb{P}(F^{u}U\leq \textbf{1}) \geq 1-\delta 
\end{aligned} 
\end{aligned}
\end{equation}
\end{problem}
\vspace{2mm}
where $\mathbb{P}$ denotes the probability measure, $F^{x}=I_{p}\otimes f^{x}$, $F^{u}=I_{p}\otimes f^{u}$, and $\W^{p}=\W\times\cdots \times \W$ ($p$ times). 


\subsection{Sampling based approach} \label{sec:Scenario}

Problem~\ref{prb:OriginalLifted} is quite often nonconvex since a multidimensional integration is typically required to compute the probability measure. In order to convert  Problem~\ref{prb:OriginalLifted} to a convex problem, scenario approach suggests to check the constraints over a finite set of random disturbances (sampled from the distribution) $\W_{N}=\{W^{(1)},\cdots,W^{(N)}\}$, instead of  $\W^{p}$.
\begin{problem} \label{prb:Scenario}  CCTO via scenario approach
	\begin{align} \label{eq:Scenario}
	\begin{aligned}
	\displaystyle\min_{K,V} ~ &\quad E(J(X,U)) \\
	\mbox{s.t. } & \hspace{1mm}
	X^{(i)}= G_{x}x_{0} + G_{u} V +(G_{u} K+ G_{w})W^{(i)} \\
	& \hspace{1mm}  F^{x}X^{(i)}\leq 1  \hspace{26mm}  i=1,\cdots,N\\
	& \hspace{1mm}  F^{u}(KW^{(i)}+V)\leq 1  \\
	\end{aligned} 
	\end{align}
	\vspace{-3mm}	
\end{problem}
Let $K_{N}^{*}, V_{N}^{*}$ be the optimal solutions of Problem~\ref{prb:Scenario}. If the number of random disturbances is sufficient, any feasible $K_{N}^{*}, V_{N}^{*}$ will be also feasible for Problem~\ref{prb:OriginalLifted} with a probability (referred to as risk of failure) $1-\beta$. 
Specifically, for a new random disturbance $W \in \W^{p}$ and consequently for the resulting optimal input $U_{N}^{*}=K_{N}^{*}W+ V_{N}^{*}$, 
\begin{equation} \label{eq:ScenarioSpecification}
\begin{aligned}
 &\mathbb{P}(\mathbb{P}(X(U_{N}^{*})\in \X) \ngtr 1-\delta)\leq \beta, \\
 &\mathbb{P}(\mathbb{P}(U_{N}^{*}\in \U) \ngtr 1-\delta)\leq \beta. 
\end{aligned}
\end{equation}
 The required number of samples for guaranteeing this requirement  differs based on the problem, but in general case the following bound on $N$ is provided by the scenario approach \cite{calafiore2006scenario}
\begin{equation} \label{eq:scenarioN}
N \geq \frac{2}{\delta}\ln \frac{1}{\beta} +2n_{\theta}+\frac{2n_{\theta}}{\delta}\ln\frac{2}{\delta},
\end{equation} 	
where $n_{\theta}$ is the number of decision variables.
\begin{remark}
	For a quadratic cost function, Problem~\ref{prb:Scenario} is a QP since the disturbance has been characterized by a polytope with at most $N$ extreme points. 
\end{remark}

The computational complexity of Problem~\ref{prb:Scenario} increases with the number of samples since each sample imposes a new set of constraints. Typically a large number of samples is required  for a reasonably prescribed confidence level and risk of failure. The original scenario approach proposes sample reduction using a greedy method by removing one sample at a time. In detail, at step $k$, $N-k+1$ QP problems with $N-k$ scenarios are solved to remove the less dominant scenario (see \cite{calafiore2010random} for more detail). Though it is a systematic way to reduce number of samples, this approach typically demands a high computational complexity due to the need of solving a large number of QP problems over large sets of samples.  


\subsection{CCTO using Approximate Convex Hull}  \label{sec:CCTO_OL}

In order to decrease the online computational complexity, we have recently suggested a greedy truncation method to select the best $\hat{N} \ll N$ samples and discard the rest \cite{sartipizadeh2018CCTO_OL}: 
 \begin{equation}
\begin{aligned}
\W_{N}=\{\overbrace{W^{(1)}, \cdots,W^{(\hat{N})}}^\text{selected- $\W_{\hat{N}}$},
\overbrace{W^{(\hat{N}+1)},\cdots,W^{(E)}}^\text{truncated},\\
\underbrace{W^{(E+1)},\cdots, W^{(N)}}_\text{removed due to redundancy} \}.
\end{aligned}
\vspace{-1mm}
\end{equation}
The main idea behind this method is to approximate the original uncertainty envelope by a polytope with $\hat{N}$ extreme points and then account for the approximation error by adjusting the constraints using proper buffers. As a consequence, the sample truncation problem is simplified to a geometrical problem of computing the  best approximate convex hull of the original uncertainty envelope. The truncation is based on using a mapping $S(W)$ that reflects the uncertain parts of state equation~\eqref{eq:sys_stacked}. 

\begin{definition}
	 Let $S(W)\in \R^{n}$ be any mapping of $W$ and for arbitrary sets $\W_{N_1}=\{W_{N_1}^{(1)},\cdots,W_{N_1}^{(N_1)}\}$ and $\W_{N_2}=\{W_{N_2}^{(1)},\cdots,W_{N_2}^{(N_2)}\}$ where $\W_{N_1}\subseteq \W_{N_2}$ let
	%
	%
	\begin{equation} \label{eq:ShatS}
	\begin{aligned}
	\mathcal{S}_{1}=\conv S(\W_{N_1})=\conv\{S(W_{N_1}^{(1)}),\cdots,S(W_{N_1}^{(N_1)})\},	\\
	\mathcal{S}_{2}=\conv S(\W_{N_2})=\conv\{S(W_{N_2}^{(1)}),\cdots,S(W_{N_2}^{(N_2)})\}.
	\end{aligned}
	\end{equation} 
	Then the Hausdorff distance from $\mathcal{S}_{1}$ to  $\mathcal{S}_{2}$ is given by \cite{sartipizadeh2018CCTO_OL}
	\begin{equation} \label{eq:dH}
	\resizebox{0.43\textwidth}{!}{$
		d_{H}(\mathcal{S}_{1},\mathcal{S}_{2})= \displaystyle \max_{W\in\W_{N_{2}}} \min_{\alpha\in \mathbb{C}} \left\|S(W)-\sum_{i=1}^{N_{1}}\alpha_{i} S(W_{N_1}^{(i)})\right\|_{\infty}.
		$}
	\vspace{2mm}
	\end{equation}
\end{definition}
	Due to infinity norm definition and since $S(W)$ is a vector, \eqref{eq:dH} is simplified to a search over max and min of some scalars. Here, let $[S(W)]_{k}$ indicate the $k^{th}$ row of $S(W)$ and for $k\in \N_{\leq n}$  define
	\begin{equation} \label{eq:EpsilonXF_kl}
	\begin{aligned} 
	[\varepsilon]_{k} =\max\{ &\max_{W\in\W_{N}}[S(W)]_{k} - \max_{\hat{W}\in\W_{\hat{N}}}[S(\hat{W})]_{k} , \\
	&\min_{\hat{W}\in\W_{\hat{N}}}[S(\hat{W})]_{k}-\min_{W\in\W_{N}}[S(W)]_{k}\} .
	\end{aligned}
	\end{equation}	
Hence \eqref{eq:dH}  simplifies to 
\begin{equation} \label{eq:d_H_inf}
	d_{H}(\mathcal{S}_{1},\mathcal{S}_{2})=\|\varepsilon\|_{\infty}.
\end{equation}

 \begin{definition} [$\epsilon$-ACH] \label{def:ACH}
	$\hat{\mathcal{S}}$ is the best $\epsilon$-approximate convex hull of $\mathcal{S}$ iff $\hat{\mathcal{S}}$ is the smallest subset of $\mathcal{S}$, with fewest number of elements, so that $d_{H}(\hat{\mathcal{S}}, \mathcal{S})\leq \epsilon$.
\end{definition}

 When there is no feedback ($K=0$), the input constraints are deterministic and can be satisfied if a feasible solution of the state constrained optimization problem exists. Define $S(W)=F^{x}G_{w}W$ and let $\mathcal{S}=\conv S(\W_{N})$. Find the best $\hat{N}$ samples such that $d_{H}(\hat{\mathcal{S}}, \mathcal{S})\leq \epsilon$ where $\hat{\mathcal{S}}=\conv S(\W_{\hat{N}})$. $\W_{\hat{N}}$ can  simply be obtained using a greedy approach, which is initiated by a random extreme point of $S(\W_{N})$ and continued by recursively adding the sample that minimizes the Hausdorff distance (see \cite{sartipizadeh2018CCTO_OL} for details). Given $\W_{\hat{N}}$, the following problem with only $\hat{N}$ set of constraints and the state buffer $\varepsilon^{x}$ is suggested as a replacement for Problem~\ref{prb:Scenario}.     

	\vspace{2mm}
\begin{problem} \label{prb:ACH_OL}  Open-loop CCTO via $\epsilon$-ACH
	\begin{align} \label{eq:ACH_OL}
	\begin{aligned}
	\displaystyle\min_{V} ~ &\quad E(J(X,V)) \\
	\mbox{s.t. } & \hspace{1mm}
	X^{(i)}= G_{x}x_{0} + G_{u} V + G_{w}W^{(i)} \\
	& \hspace{1mm}  F^{x}X^{(i)}\leq \textbf{1}-\varepsilon^{x}  \hspace{20mm}  i=1,\cdots,\hat{N}\\
	& \hspace{1mm}  F^{u}V\leq \textbf{1 } \\
	\end{aligned} 
	\end{align}
	\vspace{-3mm}	
\end{problem}
$\varepsilon^{x}\in\R^{n_{c_x}p}$ is simply calculated through \eqref{eq:EpsilonXF_kl} where each element of $\varepsilon^{x}$ indicates the buffer on a specific constraint at some specific time. 

\begin{remark}
	Although buffering may reduce the feasible set of solutions, it preserves the original confidence level and failure risk. Let $V^{*}_{\hat{N}}$ and $V^{*}_{N}$ be the optimal solutions of Problem~\ref{prb:ACH_OL} and Problem~\ref{prb:Scenario} when $K=0$, respectively. For any $W\in\W^{p}$ the following conditions  hold \cite{sartipizadeh2018CCTO_OL}:
	\begin{equation}
	\begin{aligned}
		\mathbb{P}(\mathbb{P}(X(V_{\hat{N}}^{*})\in \X^{p}) \ngtr 1-\delta)\\
        \leq  \mathbb{P}(\mathbb{P}(X(V_{N}^{*})\in \X^{p}) \ngtr 1-\delta)\leq \beta, 
	\end{aligned}
	\end{equation}
\end{remark}

It is noted that the truncated sample set and buffers are computed once offline, although they may be updated online for time-varying constraints. Truncating more samples speeds up the online computation by effectively decreasing the number of constraints. However, a bigger buffer may be imposed to the constraints, which may decrease the size of the feasible region (and hence increase the conservatism of the solution). This trade-off can be selected by user and based on the application.


\section{ACH-CCTO with  Disturbance Feedback}

With the inclusion of feedback control, the input is also a stochastic variable and needs to remain in its safe region by a prescribed confidence level. Consequently, sample truncation also impacts the input constraint satisfaction. Therefore, similar to state constraints, input constraints need to be adjusted using proper buffers to preserve the probabilistic requirements. In addition, since the uncertainty envelope is a function of the feedback gain $K$,  input and state buffers must be also synthesized as functions of $K$.

In this section, we extend our sample truncation method for CCTO problem with disturbance feedback (Problem~\ref{prb:OriginalLifted}) while the feedback gain $K$ is defined as in \eqref{eq:KV}.  
 In Section~\ref{sec:TruncationMapping}, we present a proper truncation mapping $S(W)$ to extract $\hat{N}$ samples. Given the truncation mapping we define the state and input buffers in Section~\ref{sec:ComputingBuffers} and finally set up the truncated control problem as an alternative to the original CCTO problem.


\subsection{Sample Truncation} \label{sec:TruncationMapping}
Similar to the open-loop case, we define the truncation mapping $S(W)$ using the uncertain terms of state and input trajectories to capture the uncertainty in the direction of  normal vectors that describe the polytopes for the state and control constraints.  

The contribution of uncertainties  to state constraints are captured by the following mappings  $F^xG_{u}KW$ and $F^{x}G_{w}W$ while $F^{u}KW$ captures  the contribution of uncertainty to input constraints. Note that $K$ is a decision variable and cannot be used in defining the truncation mapping, and consequently sample truncation, since the truncation is performed offline and  $K$ is not given a priori.  On the other hand,   $F^xG_{u}KW$ and  $F^{u}KW$ contain $K$. To resolve this problem, we suggest to reorder these two mappings  by  using Lemma~\ref{lem:matrixSwitch}. This reordering  gives us the flexibility to truncate the samples and compute preliminary buffers based on the parts of these mappings that do not depend on $K$, offline, and then update the buffers according to $K$, online. 

\begin{lemma}[reordering matrix multiplication] \label{lem:matrixSwitch}
	Let	$A= \begin{pmat} [{|||}]
	a_1 & a_2 & \cdots &a_{z} \cr	\end{pmat} \in\R^{n\times z}$ and $B= \begin{pmat} [{|||}]
	b_1^{T} & b_2^{T} & \cdots &b_{z}^{T} \cr	\end{pmat}^{T}\in\R^{z\times m}$ be two matrices with $a_{i}$ and $b_{j}$ denoting the $i^{th}$ column of $A$ and $j^{th}$ row of $B$, respectively. It is proven that $AB=\bar{B}\bar{A}$ for
\begin{equation} \label{eq:matrixSwitch}
\begin{aligned}
\bar{B}&=\begin{bmatrix}b_1 &b_2& \cdots&b_z\end{bmatrix}\otimes I_n \\
\bar{A}&=\begin{bmatrix} 
(I_{m}\otimes a_{1})^{T} & (I_{m}\otimes a_{2})^{T}& \cdots & (I_{m}\otimes a_{z})^{T} \end{bmatrix}^{T}.
\end{aligned}
\end{equation}
\end{lemma}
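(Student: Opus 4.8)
\emph{Proof plan.} The plan is to reduce the identity to the classical ``outer product'' (rank-one) expansion of a matrix product, combined with the mixed-product property of the Kronecker product. Writing $a_i\in\R^{n}$ for the $i$th column of $A$ and $b_i\in\R^{1\times m}$ for the $i$th row of $B$, recall that
\begin{equation*}
AB=\sum_{i=1}^{z}a_i b_i ,
\end{equation*}
a sum of $z$ rank-one matrices of size $n\times m$. It therefore suffices to show that $\bar B\bar A$ admits the same expansion.

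First I would expose the block structure of the two factors. Because the Kronecker product distributes over horizontal block concatenation,
\begin{equation*}
\bar B=\begin{bmatrix} b_1 & \cdots & b_z\end{bmatrix}\otimes I_n=\begin{bmatrix} b_1\otimes I_n & \cdots & b_z\otimes I_n\end{bmatrix},
\end{equation*}
a single block-row of $z$ blocks each of size $n\times mn$; and since transposing a horizontal concatenation produces a vertical one,
\begin{equation*}
\bar A=\begin{bmatrix} (I_m\otimes a_1)^{T} & \cdots & (I_m\otimes a_z)^{T}\end{bmatrix}^{T}=\begin{bmatrix} I_m\otimes a_1\\ \vdots\\ I_m\otimes a_z\end{bmatrix},
\end{equation*}
a single block-column of $z$ blocks each of size $mn\times m$. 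Block multiplication then yields
\begin{equation*}
\bar B\bar A=\sum_{i=1}^{z}(b_i\otimes I_n)(I_m\otimes a_i).
\end{equation*}

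It remains to simplify each summand. Applying the mixed-product rule $(P\otimes Q)(R\otimes S)=(PR)\otimes(QS)$ --- legitimate here since $b_i I_m$ and $I_n a_i$ are both defined --- gives $(b_i\otimes I_n)(I_m\otimes a_i)=(b_i I_m)\otimes(I_n a_i)=b_i\otimes a_i$, and, $b_i$ being a row vector and $a_i$ a column vector, $b_i\otimes a_i$ is precisely the outer product $a_i b_i\in\R^{n\times m}$. Summing over $i$ recovers $\sum_{i=1}^{z}a_i b_i=AB$, which completes the argument. The only delicate point is the bookkeeping: one must keep the row/column conventions for $a_i$ and $b_i$ consistent so that the blocks $b_i\otimes I_n$ and $I_m\otimes a_i$ are conformable and the hypotheses of the mixed-product rule hold. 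A fully elementary alternative, which I would keep in reserve, is to verify the identity entry by entry using the explicit formula for the entries of a Kronecker product; that route is routine but more tedious.
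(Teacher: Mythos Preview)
Your argument is correct and follows essentially the same route as the paper: expand $AB=\sum_\ell a_\ell b_\ell$, identify each outer product with $(b_\ell\otimes I_n)(I_m\otimes a_\ell)$, and then recognize the sum as the block product $\bar B\bar A$. The only difference is cosmetic---you justify $a_\ell b_\ell=(b_\ell\otimes I_n)(I_m\otimes a_\ell)$ via the mixed-product rule, whereas the paper simply asserts it.
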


\begin{proof}
For any $a_{\ell}\in\R^{n\times1}$ and $b_{\ell}\in\R^{1\times m}$, $\ell\in\N_{\leq z}$,  one can show $a_{\ell}b_{\ell}=(b_{\ell}\otimes I_{n})(I_{m} \otimes a_{\ell})$. Thus,
 \[
AB=\sum_{\ell=1}^{z} a_{\ell}b_{\ell}= \sum_{\ell=1}^{z} (b_{\ell}\otimes I_{n})(I_{m} \otimes a_{\ell})
\]
which can be written in the vector form $\bar{B}\bar{A}$ with $\bar{B}$ and $\bar{A}$ defined as in \eqref{eq:matrixSwitch}.
\end{proof}

We refer to $\bar{A}$ and $\bar{B}$ as lifted matrices of $A$ and $B$ since we are lifting  $A$ and $B$ to higher dimensions. 
 

 Now using Lemma~\ref{lem:matrixSwitch}, we can define lifted  matrices $\overline{F^xG_{u}}$, $\overline{F^u}$, $\overline{K^{x}}$, and $\overline{K^{u}}$, such that $F^xG_{u}K=\overline{K^{x}}~ \overline{F^xG_{u}}$ and $F^{u}K=\overline{K^{u}}~ \overline{F^{u}}$. Note after the lifting the matrix multiplications are reordered as suggested by Lemma \ref{lem:matrixSwitch}, $\overline{K^{x}}$ and $\overline{K^{u}}$ are the design variables since they are constructed from the elements of $K$. Therefore, we define the truncation mappings based on the constant  parts of these mappings as follows
\begin{equation} \label{eq:TruncationMapping_CL_General}
S(W)=\begin{pmat} [.]
S^{cl} \cr\- 
S^{ol} \cr\- 
S^{u} \cr
\end{pmat}=\begin{pmat} [.]
\overline{F^{x}G_{u}} \cr\- 
F^{x}G_{w} \cr\- 
\overline{F^{u}} \cr
\end{pmat}W.
\end{equation}
 Let $\W_{N}$ be a set of $N$ random samples satisfying the desired confidence level and risk of failure. Also let $\mathcal{S}=\conv S(\W_{N})$ where $S(W)$ is defined as \eqref{eq:TruncationMapping_CL_General}. The truncated sample set 
 $\W_{\hat{N}}$ can be computed so that $\hat{\mathcal{S}}=\conv S(\W_{\hat{N}})$ is the $\epsilon$-ACH of $\mathcal{S}$, i.e., $d_{H}(\hat{\mathcal{S}},\mathcal{S})\leq \epsilon$ where $d_{H}$ is defined in \eqref{eq:d_H_inf}. A greedy algorithm  to computed $\W_{\hat{N}}$ is later given in Section~\ref{sec:Implementation}. 
 
 Next, we will explain the buffering process to compensate for the truncation error due to using  fewer number of samples than suggested by scenario approach and  removing $K$ from the uncertainty mappings.  

\subsection{ State/Input Buffer Computation} \label{sec:ComputingBuffers}

 Given $\W_{\hat{N}}$ (as computed in Section~\ref{sec:TruncationMapping}) and the truncation mapping \eqref{eq:TruncationMapping_CL_General},   using \eqref{eq:EpsilonXF_kl} we compute
 \begin{equation} \label{eq:varepsilon}
  \varepsilon=\begin{pmat}[{||}] {\varepsilon^{cl}}^{T} & {\varepsilon^{ol}}^{T} & {\varepsilon^{u}}^{T} \cr \end{pmat}^{T}  
 \end{equation}
 and define
 \begin{subequations} \label{eq:bufferCoeff}
 	\begin{align}
\epsilon^{cl}&=d_{H}(\hat{\mathcal{S}}^{cl},\mathcal{S}^{cl})=\|\varepsilon^{cl}\|_{\infty}, \\
\epsilon^{ol}&=d_{H}(\hat{\mathcal{S}}^{ol},\mathcal{S}^{ol})=\|\varepsilon^{ol}\|_{\infty},\\
\epsilon^{u}&=d_{H}(\hat{\mathcal{S}}^{u},\mathcal{S}^{u})=\|\varepsilon^{u}\|_{\infty}.
 	\end{align}
 \end{subequations} 
Note that since $d_{H}(\hat{\mathcal{S}},\mathcal{S})=\|\varepsilon\|_{\infty}=\epsilon$, it is concluded from \eqref{eq:varepsilon} that $\epsilon=\max\{\epsilon^{cl},\epsilon^{ol},\epsilon^{u}\}$, and consequently 
 $\epsilon^{cl}, \epsilon^{ol}, \epsilon^{u}$ are upper bounded by $\epsilon$.  Also define,
\begin{equation}
	\kappa_{t}=vec(K_{1,0}, K_{2,0}, \cdots K_{t-1,t-2}) .
\end{equation}
 For $t\in \N_{\leq p}$, $\kappa_{t}$ is the vectorized form of block rows of $K$ associated with time instances up to $t$. For instance, $\kappa_{1}$ is a zero vector,   $\kappa_{2}$ is the vectorized form of $K_{1,0}$, and $\kappa_{3}$ is a vector that consists of all elements of $K_{1,0}$, $K_{2,0}$ and $K_{2,1}$.   
 We claim that by defining the following buffered constraint sets at time $t$,  the prescribed confidence level is preserved. This will be proven later in Theorem~\ref{thm:ACH_CL_General}.  
 \begin{subequations} \label{eq:DynamicBuffers}
 	\begin{align}
 	\hat{\X}_{t}&=\{x|fx\leq \textbf{1}-\textbf{1} \epsilon^{cl}\|\kappa_{t}\|-\varepsilon^{ol}_{t}\}, \\
 	\hat{\U}_{t}&=\{u|fu\leq \textbf{1}-\textbf{1} \epsilon^{u}\|\kappa_{t}\|\},
 	\end{align}
 \end{subequations}
\subsection{Approximated Problem} \label{sec:AproximatedProblem}
 
 Given the truncated sample set  (as computed in Section~\ref{sec:TruncationMapping}) and buffer coefficients calculated using \eqref{eq:bufferCoeff}, we suggest the following truncated problem. The theoretical properties of the solutions of this problem are summarized  in Theorem~\ref{thm:ACH_CL_General}.

\vspace{2mm}
\Ovalbox{
	\begin{minipage}{0.44\textwidth}
		\begin{problem} \label{prb:ACH_CL_General}  closed loop CCTO via $\epsilon$-ACH 
			\begin{align*} \label{eq:ApproximateProblem}
			\begin{aligned}
			\displaystyle\min_{K,V,\zeta} ~ &\quad \mathbb{E}(J(X,U)) \\
			\mbox{s.t. } & \hspace{1mm}
			X^{(i)}= G_{x}x_{0} + G_{u} V +(G_{u} K+ G_{w})W^{(i)} \\
			& \hspace{1mm}  F^{x}X^{(i)}\leq \textbf{1}_{pn_{c_x}} - \epsilon^{cl} \zeta^{x} -\varepsilon^{ol} \hspace{5mm}  i=1,\cdots,\hat{N} \\
			& \hspace{1mm}  F^{u}(KW^{(i)}+V)\leq \textbf{1}_{pn_{c_u}}-\epsilon^{u}\zeta^{u}   \\
			& \hspace{1mm}  \|\kappa_{t}\|\leq \zeta_{t}  \hspace{36mm}  t=1,\cdots,p
			\end{aligned} 
			\end{align*}
where $\zeta=\begin{bmatrix} \zeta_{1},\cdots,\zeta_{p} \end{bmatrix}^{T} \in \R^{p}$ and
\begin{subequations} \label{eq:Zeta_xu}
	\begin{align}
	\zeta^{x}&=\begin{bmatrix}\textbf{1}_{n_{c_x}}^{T}\otimes\zeta_{1}, \cdots,\textbf{1}_{n_{c_x}}^{T}\otimes\zeta_{p}\end{bmatrix}^{T}, \\
	\zeta^{u}&=\begin{bmatrix}\textbf{1}_{n_{c_u}}^{T}\otimes\zeta_{1}, \cdots,\textbf{1}_{n_{c_u}}^{T}\otimes\zeta_{p}\end{bmatrix}^{T},\\
	\kappa_{t}&=vec(K_{1,0}, K_{2,0}, \cdots K_{t-1,t-2}) .
	\end{align}
\end{subequations}
		\end{problem}
	\vspace{2mm}
\end{minipage}}
\vspace{3mm}

\noindent
It is important to note that Problem~\ref{prb:ACH_CL_General} is convex for any convex $J$.  $\epsilon^{cl} \in\R$, $\varepsilon^{ol} \in\R^{pn_{c_x}}$, and $\epsilon^{u} \in\R$ are computed offline using \ref{eq:bufferCoeff}, and $\zeta$ adjusts the buffers based on the optimal $K$. 

\begin{theorem} \label{thm:ACH_CL_General}
Let $\W_{N}$ be a set of $N$ random disturbances where $N$ is calculated from \eqref{eq:ScenarioSpecification} while $n_{\theta}$ indicates the number of decision variables in Problem~\ref{prb:ACH_CL_General}. Let the truncation mapping be given by \eqref{eq:TruncationMapping_CL_General} and  $\W_{\hat{N}}$ be  computed such that $d_{H}(\hat{\mathcal{S}},\mathcal{S})\leq \epsilon$, where $\hat{\mathcal{S}}=\conv S(\W_{\hat{N}})$ and $\mathcal{S}=\conv S(\W_{N})$. Let $K_{\hat{N}}^{*}$, $V_{\hat{N}}^{*}$, $\zeta^{*}_{\hat{N}}$ be the optimal solution of Problem~\ref{prb:ACH_CL_General}. 	
For the closed-loop system given by \eqref{eq:sys_dynamics} with control input  $U_{\hat{N}}^{*}=K_{\hat{N}}^{*}W+V_{\hat{N}}^{*}$ and  $W$  with the probability distribution  $\W^{p}$,  the following probabilistic guarantees hold true:
%
\begin{subequations} \label{eq:proof1_1}
\begin{align}
&\mathbb{P}(\mathbb{P}(X(U_{\hat{N}}^{*},W)\in \X^{p}) \ngtr 1-\delta)\leq \beta, \\
&\mathbb{P}(\mathbb{P}(U_{\hat{N}}^{*}\in \U^{p}) \ngtr 1-\delta)\leq \beta. 
\end{align}
\end{subequations}
\end{theorem}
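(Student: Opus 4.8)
The plan is to reduce Theorem~\ref{thm:ACH_CL_General} to the standard scenario guarantee \eqref{eq:ScenarioSpecification}--\eqref{eq:scenarioN} by showing that every $(K,V,\zeta)$ feasible for the truncated, buffered Problem~\ref{prb:ACH_CL_General} is, after forgetting $\zeta$, feasible for the full‑sample scenario Problem~\ref{prb:Scenario}. Granting that inclusion, appending to Problem~\ref{prb:ACH_CL_General} the $N$ unbuffered constraints $F^{x}X^{(i)}\le\mathbf 1$, $F^{u}U^{(i)}\le\mathbf 1$, $i=1,\dots,N$, removes no feasible point, so the augmented program has exactly the same minimizer $(K^{*}_{\hat N},V^{*}_{\hat N},\zeta^{*}_{\hat N})$; but that program is a convex scenario program over the decision vector $\theta=(vec(K),V,\zeta)\in\R^{n_{\theta}}$ with $N$ i.i.d.\ sampled constraints (the buffer and $\|\kappa_{t}\|\le\zeta_{t}$ relations playing the role of deterministic side constraints). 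Since $N$ meets \eqref{eq:scenarioN} for this $n_{\theta}$, the bound of \cite{calafiore2006scenario} applied to the augmented program yields \eqref{eq:proof1_1} for its minimizer, hence for $(K^{*}_{\hat N},V^{*}_{\hat N})$. This is the same mechanism used in the open‑loop case (cf.\ the remark following Problem~\ref{prb:ACH_OL}).

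Thus the real work is the set inclusion. I would fix any feasible $(K,V,\zeta)$ and any $W\in\W_{N}$, and use Lemma~\ref{lem:matrixSwitch} through the lifted matrices $\overline{F^{x}G_{u}},\overline{F^{u}},\overline{K^{x}},\overline{K^{u}}$ to write
\[
F^{x}X(W)=D^{x}+\overline{K^{x}}\,S^{cl}(W)+S^{ol}(W),\qquad F^{u}U(W)=D^{u}+\overline{K^{u}}\,S^{u}(W),
\]
with $D^{x}=F^{x}G_{x}x_{0}+F^{x}G_{u}V$, $D^{u}=F^{u}V$ the deterministic parts and $S^{cl},S^{ol},S^{u}$ the blocks of \eqref{eq:TruncationMapping_CL_General}. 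Evaluating these identities at each $\hat W^{(i)}\in\W_{\hat N}$ and invoking feasibility for Problem~\ref{prb:ACH_CL_General} gives, for all $i$, $D^{x}+\overline{K^{x}}S^{cl}(\hat W^{(i)})+S^{ol}(\hat W^{(i)})\le\mathbf 1-\epsilon^{cl}\zeta^{x}-\varepsilon^{ol}$ and $D^{u}+\overline{K^{u}}S^{u}(\hat W^{(i)})\le\mathbf 1-\epsilon^{u}\zeta^{u}$, together with $\|\kappa_{t}\|\le\zeta_{t}$.

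Next I would use the $\epsilon$-ACH property of $\hat{\mathcal S}=\conv S(\W_{\hat N})$ relative to $\mathcal S=\conv S(\W_{N})$ to write, for the fixed $W\in\W_{N}$, $S(W)=\sum_{i}\alpha_{i}S(\hat W^{(i)})+e$ with $\alpha\in\mathbb C$, where by \eqref{eq:EpsilonXF_kl}--\eqref{eq:bufferCoeff} the residual blocks satisfy $\|e^{cl}\|_{\infty}\le\epsilon^{cl}$, $\|e^{u}\|_{\infty}\le\epsilon^{u}$, and $e^{ol}$ is bounded componentwise by $\varepsilon^{ol}$. Substituting into $F^{x}X(W)$ and $F^{u}U(W)$, the $\sum_{i}\alpha_{i}(\cdot)$ parts are dominated by the convex combination of the corresponding $\hat N$ buffered constraints above, and the residual is estimated row by row by Hölder's inequality: for a constraint row associated with time $t$, the relevant block rows of $\overline{K^{x}}$ (resp.\ $\overline{K^{u}}$) are assembled exactly from the entries of $\kappa_{t}$ — the causal, block lower‑triangular structure of $K$ in \eqref{eq:KV} being preserved by the lifting of Lemma~\ref{lem:matrixSwitch} — so the time‑$t$ residual contributions are at most $\epsilon^{cl}\|\kappa_{t}\|\le\epsilon^{cl}\zeta_{t}$ and $\epsilon^{u}\|\kappa_{t}\|\le\epsilon^{u}\zeta_{t}$, while the open‑loop block (not multiplied by $K$) contributes at most $\varepsilon^{ol}_{t}$. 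Adding the pieces, the buffer terms $\epsilon^{cl}\zeta^{x}$, $\varepsilon^{ol}$, $\epsilon^{u}\zeta^{u}$ cancel and one gets $F^{x}X(W)\le\mathbf 1$, $F^{u}U(W)\le\mathbf 1$ for every $W\in\W_{N}$, which is the claimed inclusion.

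I expect the main obstacle to be this last residual estimate: one must make the structure of the lifted gains $\overline{K^{x}},\overline{K^{u}}$ explicit enough to see that the row‑wise $1$‑norms acting on a time‑$t$ constraint are controlled precisely by $\|\kappa_{t}\|$ (so that the tight dynamic buffers of \eqref{eq:DynamicBuffers} suffice rather than a coarser bound), and to choose the decomposition $S(W)=\sum_{i}\alpha_{i}S(\hat W^{(i)})+e$ so that the $cl$-, $ol$- and $u$-blocks of $e$ are simultaneously controlled by $\epsilon^{cl}$, $\varepsilon^{ol}$ and $\epsilon^{u}$ instead of by the single joint bound $\epsilon$. Everything else — the cancellation of the buffer terms and the appeal to the scenario bound \eqref{eq:scenarioN} with $n_{\theta}$ now counting $K$, $V$ and $\zeta$ — is routine.
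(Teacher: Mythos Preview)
Your proposal is correct and follows essentially the same route as the paper: the paper's proof also reorders $F^{x}G_{u}K$ and $F^{u}K$ via Lemma~\ref{lem:matrixSwitch}, bounds the truncation-induced violation on each $(t,\ell)$ row by $\|\kappa_{t}\|\epsilon^{cl}+\varepsilon^{ol}_{t,\ell}$ (resp.\ $\|\kappa_{t}\|\epsilon^{u}$) via H\"older and the block lower--triangular structure of $K$, and then invokes the scenario bound of \cite{calafiore2010random} with $n_{\theta}$ counting the decision variables of Problem~\ref{prb:ACH_CL_General}. Your augmentation device and the explicit decomposition $S(W)=\sum_{i}\alpha_{i}S(\hat W^{(i)})+e$ are just a slightly more explicit packaging of the paper's chain \eqref{eq:proof1_2} and its appeal to Corollary~3.4; the two technical points you flag (simultaneous control of the $cl/ol/u$ residuals and identifying the row norm of $\overline{K^{x}}$ with $\|\kappa_{t}\|$) are exactly the places where the paper also does the work.
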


\begin{proof}
We already know that \eqref{eq:proof1_1} holds for $\hat{N}=N$ for any $K$ and $V$ (including $K^{*}_{\hat{N}}$ and $V^{*}_{\hat{N}}$), which  is implied  from  Corollary~3.4 in \cite{calafiore2010random} where this bound on the number of sufficient samples is presented, given in \eqref{eq:scenarioN}, for any prescribed confidence level and failure risk.  
Note that this bound is independent of the trajectories, while it depends on the number of decision variables $n_\theta$.

Next we prove that $\textbf{1} \epsilon^{cl}\zeta_{t}+\varepsilon^{ol}_{t}$ and $\textbf{1} \epsilon^{u}\zeta_{t}$ represent guaranteed upper bounds at time instant $t$ on the state and the input violations due to truncation, respectively. Equivalently we prove that keeping the state/input truncated uncertainty envelope in the buffered state/input constraint set ensures that the original state/input uncertainty envelope remains in the original state/input constraint set.  

%

Violation on the $\ell^{th}$ state constraint at time instant $t$ can be calculated as in (\ref{eq:proof1_2}a) where $X(W)$ is defined as in \eqref{eq:sys_stacked}, and subscript $t,\ell$ denotes the row associated with time instant $t$ and constraint $\ell$. (\ref{eq:proof1_2}a)  simplifies to (\ref{eq:proof1_2}b) by using the fact that $\max_{W\in\W_{N}}\min_{\alpha\in \mathbb{C}} \|*\|=0$  for any $*$ that is independent of $W$. (\ref{eq:proof1_2}c) is resulted using triangle inequality. According to Lemma~\ref{lem:matrixSwitch} there always exist at least one  $\overline{K^{x}}$ and one $\overline{F^xG_{u}}$ such that $F^xG_{u}K$  multiplication can be reordered as $\overline{K^{x}}~ \overline{F^xG_{u}}$. Hence, (\ref{eq:proof1_2}c) equals to (\ref{eq:proof1_2}d). Note that $\overline{F^{x}G_{u}}W$ and $F^{x}G_{w}$,  in (\ref{eq:proof1_2}d), denote $S^{cl}(W)$ and $S^{ol}(W)$. Thus, according to the definition of the Hausdorff distance  \eqref{eq:dH}, and since the violation on $S_{t,\ell}^{cl}(W)$ and $S_{t,\ell}^{ol}(W)$ are bounded by $\epsilon_{cl}$ and $\varepsilon_{t,\ell}^{ol}$ (see \eqref{eq:bufferCoeff},\eqref{eq:varepsilon}),  (\ref{eq:proof1_2}e) is concluded.
 The last statement of (\ref{eq:proof1_2}e) is concluded by using  the fact that $F^xG_{u}$, $K$, and consequently their multiplication, are block lower triangular matrices where blocks are divided timewise from $1$ to $p$. Therefore, by setting the upper triangular part of $\overline{K^{x}}$ including the main diagonal to zero, $F^{x}G_{u}K=\overline{K^{x}}~\overline{F^{x}G_{u}}$ still remains valid. Thus, for any $\ell\in\N_{\leq n_{c_x}}$, $\|\overline{K}_{1,\ell}\|=0$ and for $2\leq t\leq p$ one can show
\begin{equation}
\|\overline{K}_{t,\ell}\|=\|vec(K_{1,0},\cdots,K_{t-1,t-2})]\|=\|\kappa_{t}\|.
\end{equation}
Since $\zeta_{t}$ is an upper bound on $\|\kappa_{t}\|$, $\textbf{1} \epsilon^{cl}\zeta_{t}{\color{red}+}\varepsilon^{ol}_{t}$ provides an upper bound on the state violation due to truncation, and consequently presents a proper buffer on the state constraint set.

\begin{figure*}
\begin{subequations} \label{eq:proof1_2}
\begin{align}
\max_{W\in\W_{N}}\min_{\alpha\in \mathbb{C}} &\left\|F_{t,\ell}^{x}X(W)-\sum_{i=1}^{\hat{N}}\alpha_{i}F_{t,\ell}^{x}X(W^{(i)})\right\|  \\
 &= \max_{W\in\W_{N}}\min_{\alpha\in \mathbb{C}} \left\|F_{t,\ell}^{x}(G_{u}KW+G_{w}W)-\sum_{i=1}^{\hat{N}}\alpha_{i}F_{t,\ell}^{x}(G_{u}KW^{(i)}+G_{w}W^{(i)})\right\| \\
&\leq\max_{W\in\W_{N}}\min_{\alpha\in \mathbb{C}} \left(\left\|F_{t,\ell}^{x}G_{u}KW-\sum_{i=1}^{\hat{N}}\alpha_{i}F_{t,\ell}^{x}G_{u}KW^{(i)}\right\| + \left\| F_{t,\ell}^{x}G_{w}W -\sum_{i=1}^{\hat{N}}\alpha_{i}F_{t,\ell}^{x}G_{w}W^{(i)}\right\| \right) \\
&=\max_{W\in\W_{N}}\min_{\alpha\in \mathbb{C}} \left(\left\|\overline{K^{x}_{t,\ell}} ( \overline{F^{x}G_{u}}W-\sum_{i=1}^{\hat{N}}\alpha_{i}\overline{F^{x}G_{u}}W^{(i)})\right\| + \left\| F_{t,\ell}^{x}G_{w}W -\sum_{i=1}^{\hat{N}}\alpha_{i}F_{t,\ell}^{x}G_{w}W^{(i)}\right\| \right)\\
&\leq  \|\overline{K^{x}_{t,\ell}}\| d_{H}(\hat{\mathcal{S}}^{cl},\mathcal{S}^{cl})+ d_{H}(\hat{\mathcal{S}}^{ol},\mathcal{S}^{ol}) =
\|\overline{K^{x}_{t,\ell}}\|\epsilon^{cl}+\varepsilon^{ol}_{t,\ell}=
 \|\kappa_{t}\|\epsilon^{cl}+\varepsilon^{ol}_{t,\ell}.
\end{align}
\end{subequations}	
\end{figure*}

 Since $\overline{K^{u}}$ has the same structure as $\overline{K^{x}}$ (with different number of rows), one can similarly prove $\epsilon^{u}\zeta_{t}$ provides an upper bound on the input constraint violation due to truncation at time $t$. Hence, the proof is completed.
 	 
\end{proof}

\begin{remark}
	Problem~\ref{prb:ACH_CL_General} has $p$ decision variables more than Problem~\ref{prb:Scenario} due to the addition of  the slack variable $\zeta$. Therefore, more original samples are required for constructing Problem~\ref{prb:ACH_CL_General}. However, these samples are only used to define the buffers and only $\hat{N}$ of them are directly used in the online computation of  the closed-loop input.  
\end{remark}


\section{Implementation} \label{sec:Implementation}

The implementation of the proposed truncated CCTO problem with disturbance feedback is simply executed in two offline and online steps. 

\textbf{offline:} Using \eqref{eq:scenarioN} one can find the required number of samples based on the desired confidence level $\delta$ and risk of failure $\beta$ for Problem~\ref{prb:ACH_CL_General} with variables $K$, $V$, and $\zeta$ (typically another slack variable may be also used to convert the quadratic cost function to a linear one) and generate $\W_{N}$ through randomly sampling the disturbance set $\W^{p}$. Given $F^{x}$, $F^{u}$, and $G^{u}$, one can use Lemma~\eqref{lem:matrixSwitch} to construct $\overline{F^{x}G_{u}}$ and $\overline{F^u}$ and then map $\W_{N}$ to $S(\W_{N})$ using \eqref{eq:TruncationMapping_CL_General}. It is notated that $S(\W_{N})$ is a set of $N$ vectors corresponding to $N$ different scenarios(disturbances). To select the dominant $\hat{N}$ scenarios, the following greedy algorithm can be used.

\begin{enumerate}
	\item  Initiate  $\W_{\hat{N}}$ with the argument of an extreme point of $S(\W_{N})$ (with some abuse of notation, say $W_{1}$). For instance, the farthest element from any element in a set is an extreme point. Find $\varepsilon$ from \eqref{eq:EpsilonXF_kl} by setting $\W_{\hat{N}}=\W_{1}=W_{1}$. 
	\item At step $\ell$ given $\W_{\ell-1}$ and $\varepsilon$ find $W_{\ell}$ so that $d_{H}(\conv S(\W_{\ell-1}\cup W_{\ell}),\conv S(\W_{N}))$ is minimized. According to the Hausdorff distance definition given in \eqref{eq:EpsilonXF_kl}-\eqref{eq:d_H_inf} this is simply to select the $W_{\ell} \in \W_{N} \backslash\W_{\ell-1}$ that minimizes the largest element of $\varepsilon$. This procedure is fast since only a search over the elements of a vector is required!
	\item Repeat step 2 while $\ell\leq\hat{N}$ and $\varepsilon\neq \textbf{0}$.
\end{enumerate}

After finding the best $\hat{N}$ samples, $\varepsilon^{ol}$, $\varepsilon^{cl}$, $\varepsilon^{u}$ and consequently $\epsilon^{cl} \in\R$ and $\epsilon^{u} \in \R$ can be calculated offline from $\varepsilon$. 

\textbf{online:} Given $\W_{\hat{N}}$, $\epsilon^{cl}$, $\epsilon^{u}$ and $\varepsilon^{ol}$, the given convex problem (Problem~\ref{prb:ACH_CL_General}) is solved for $K^{*}_{\hat{N}}$, $V^{*}_{\hat{N}}$, and $\zeta^{*}_{\hat{N}}$. Note that for any  disturbance sequence $W\in\W^{p}$, $U^{*}_{\hat{N}}=K^{*}_{\hat{N}}W+V^{*}_{\hat{N}}$ guarantees the state and input constraints satisfaction by confidence level of $1-\delta$ and failure risk of $\beta$.


 \section{Illustrative Example} \label{sec: Illustrative Example }

 \begin{figure}
 	\vspace{-0mm}
 	\centering
 	\includegraphics[width=3in]{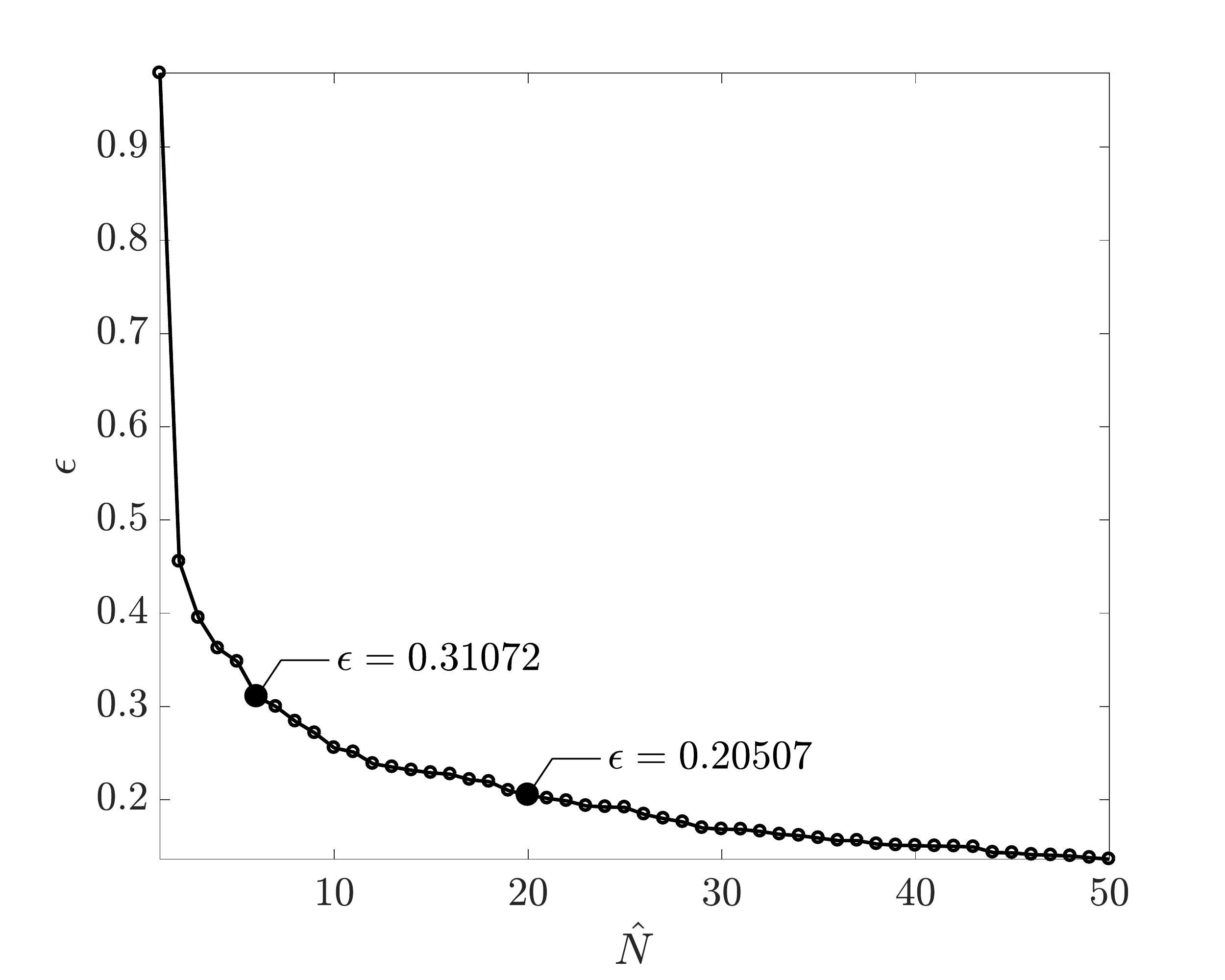}\\
 	\vspace{-3.5mm}
 	\caption{Approximation (truncation) error vs number of selected elements.}
 	\vspace{-6mm}
 	\label{fig:Epsilon}
 \end{figure}

  \begin{figure*} [t!]
	\vspace{-2mm}
	\centering
	\begin{minipage}{2.55in}
		\centering
		\includegraphics[width=2.55in]{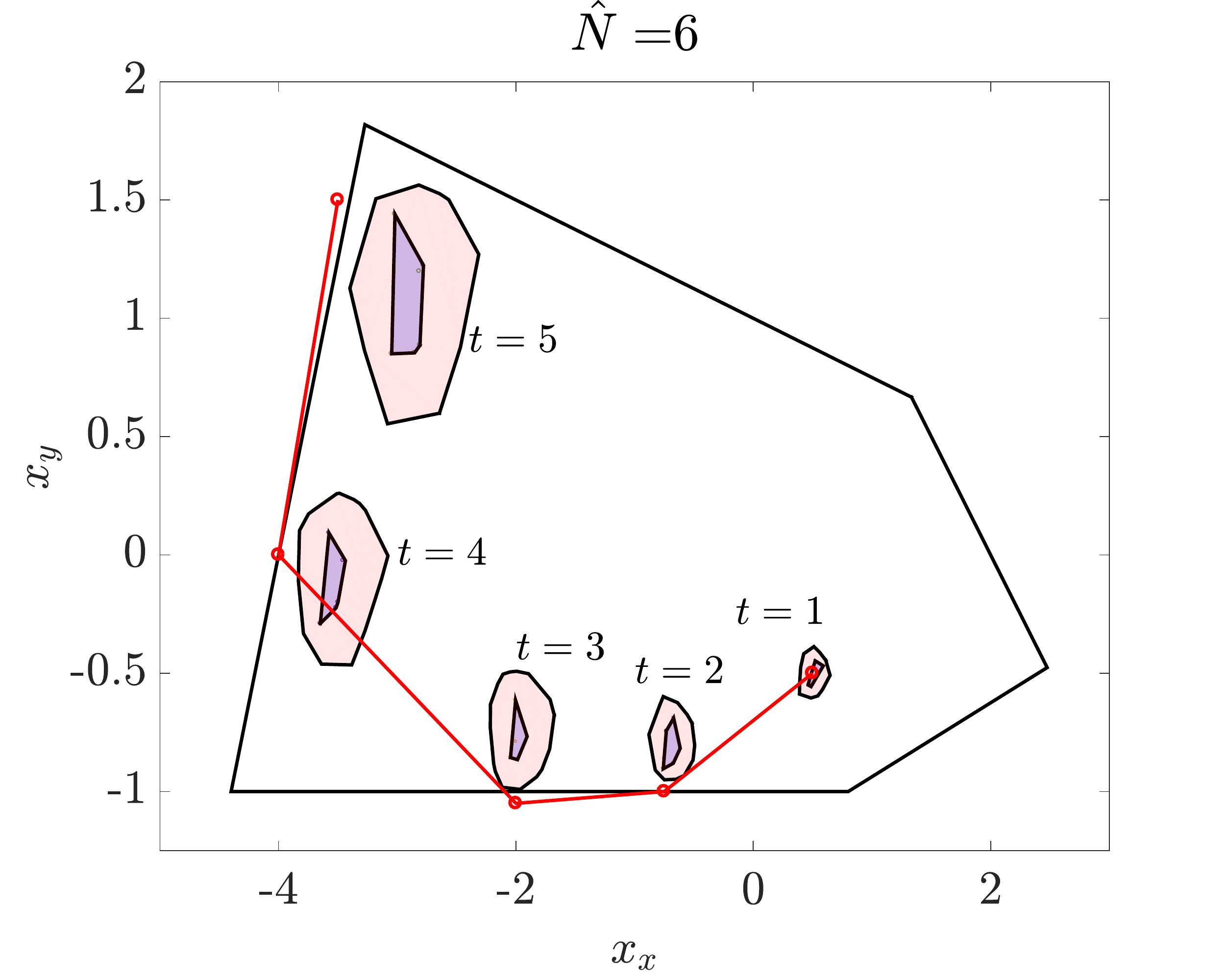}\\
	\end{minipage}
	\begin{minipage}{2.55in}
		\centering
		\includegraphics[width=2.55in]{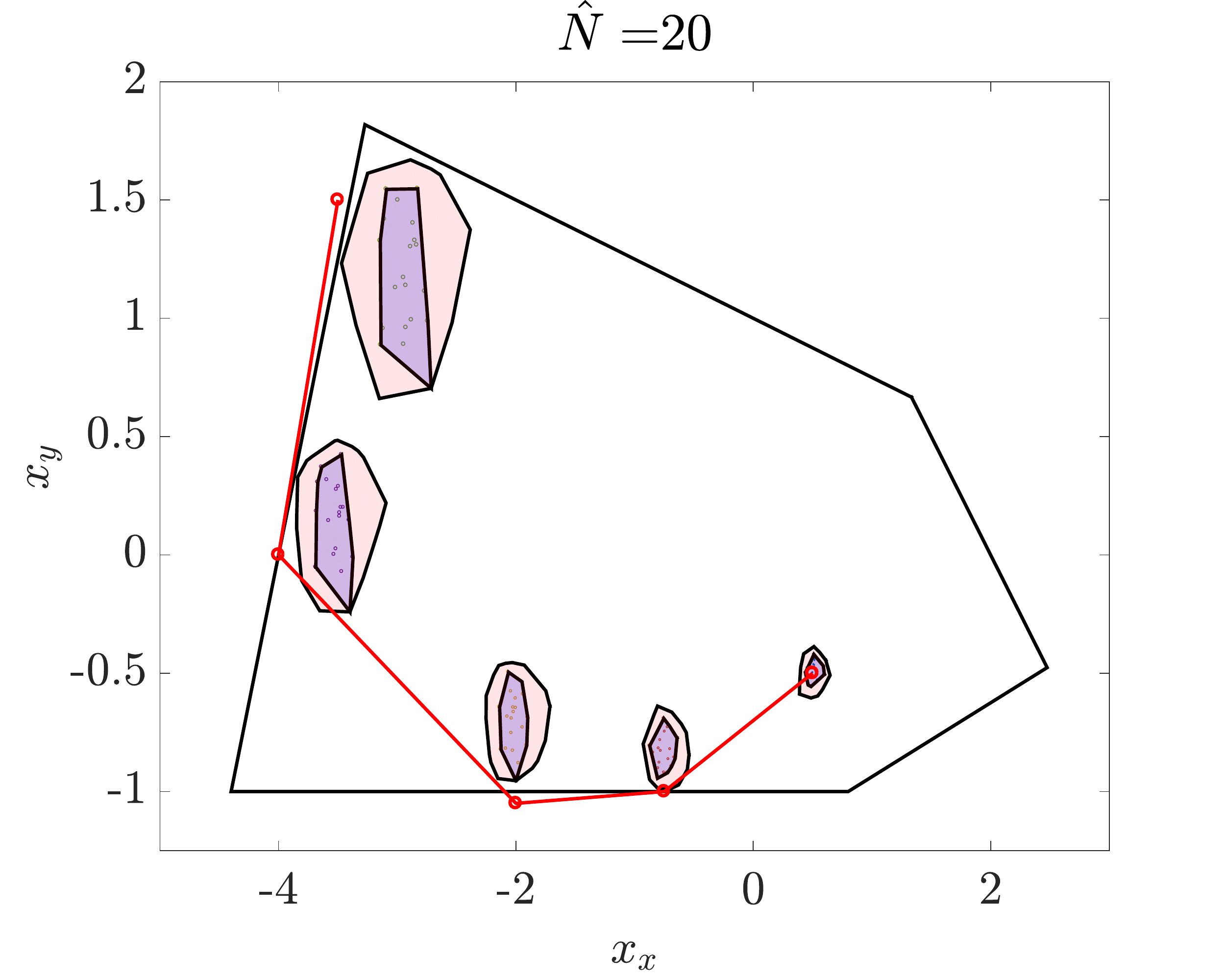}\\
	\end{minipage}
	\vspace{-2.55mm}
	\caption{state trajectory tracking results with $6$ and $20$ samples (out of $5564$ samples) while states are subject to remain in the safe region. The original and truncated uncertainty envelopes have been distinguished by bright and dark polytopes for $t=1,\cdots,5$.}
	\vspace{-5.5mm}
	\label{fig:Control Results}
\end{figure*}

 To show the application of the suggested method for closed-loop CCTO, we simulate the trajectory  performance of the following double-integrator robot (in a $2$-D plane). In this example we assume $T_s=1$ s,
 
 \begin{equation} \label{eq:SystemMatrices_RR}
 \begin{bmatrix} \dot{x}_{x}\\\dot{v}_{x} \\ \dot{x}_{y}\\\dot{v}_{y} \end{bmatrix} =\begin{bmatrix} 1&T_s &0 &0\\0 &1 &0 &0 \\ 0 &0 &1 &T_s \\0 &0 &0 &1 \end{bmatrix} \begin{bmatrix} x_{x}\\v_{x} \\ x_{y}\\v_{y} \end{bmatrix}+
 \begin{bmatrix} 0 &0\\T_s & 0\\ 0 &0 \\ 0 & T_s \end{bmatrix}\begin{bmatrix}
 u_x \\ u_y \end{bmatrix}.
  \end{equation}
  We assume $B_w=I_{4}$ and the states are disturbed by $w\sim \mathcal{N} \left(\textbf{0},\sigma_{w}^{2}\right)$ where $\sigma_{w}^{2}=diag(1e-3,4e-4,1e-3,4e-4)$. It is assumed that the uncertainty distribution is not known but we can sample the uncertainty. In addition, $x_0=\begin{bmatrix}  0.5 &0 &-.5 &0  \end{bmatrix}^{T}$. $K$ is limited to be in the convex hull of a prescribed set of  $K$ matrices and also the states and inputs are to remain into the following safe regions,
  \begin{equation}
  \begin{bmatrix}
  0.5 &0.5\\0.25 &1\\ -0.25 &0.1\\0.25 &-0.8\\ 0 & -1
  \end{bmatrix} \begin{bmatrix}
  x_{x}\\x_{y}
  \end{bmatrix}\leq \textbf{1}; 
    \begin{bmatrix}
  0.06 &0.08\\0.05 &-.15\\ 0.05 &0.08\\0 &0.2
  \end{bmatrix} u\leq \textbf{1}.
  \end{equation}
    We take $5564$ samples to ensure $\mathbb{P}(\mathbb{P}(X\notin\X)>0.02)\leq0.0001$, $\mathbb{P}(\mathbb{P}(U\notin\U)>0.02)\leq0.0001$. Figure~\ref{fig:Epsilon} shows the truncation results for $\hat{N}\in\N_{\leq 50}$. The truncation errors for $\hat{N}=6$ and  $\hat{N}=20$ have been highlighted while the control results for the two selected $\hat{N}$ is presented in Figure~\ref{fig:Control Results}. For $\hat{N}=6$ we computed $\epsilon_{u}=0.3107$, $\epsilon_{cl}=0.0155$ and for  $\hat{N}=20$, $\epsilon_{u}=0.2051$, $\epsilon_{cl}=0.0103$.  
  Figure~\ref{fig:Control Results} shows that the robot could successfully remain in the safe region while the case with $\hat{N}=6$ is slightly more conservative in this example. Inputs also remain in the desired safe regions although only the state trajectory has been plotted. Note that we have already assumed that $N$ samples are enough to meet the desired specification, and compared the scenario and truncated problems results.


\section{Conclusion} \label{sec:Conclusion}

In this paper, we presented a method to reduce the number of samples in scenario approach for a closed-loop chance constrained control  problem with disturbance feedback. In the proposed method, the most dominant samples are selected by the user and the rest are truncated. The truncation error is later compensated for by adjusting the constraint set using proper dynamic buffers. We showed that the new problem with the truncated sample set and constraints, is convex and its solution satisfies the problem specifications (the required confidence level and risk of failure). The proposed method was successfully implemented through simulations on an illustrative  $2$-D robot example.


\section*{Acknowledgements} 
\vspace{-1mm}
This work was supported by Air Force Research Laboratory grant FA8650-15-C-2546 and the Office of Naval Research (ONR) Grant No. N00014-15-IP-00052.

\bibliographystyle{IEEEtran}
\bibliography{SMPC}

\end{document}